\mathchardef\varDelta="7101
\newcommand{\dgcat}{\mathsf{dgcat}}
\renewcommand{\to}{\mathchoice{\longrightarrow}{\rightarrow}{\rightarrow}{\rightarrow}}
\newcommand{\cA}{{\mathcal A}}
\newcommand{\cB}{{\mathcal B}}
\newcommand{\cD}{{\mathcal D}}
\newcommand{\cM}{{\mathcal M}}
\newcommand{\cU}{{\mathcal U}}
\newcommand{\Hmo}{{\mathsf{Hmo}_0}}
\def\quickop#1{\expandafter\DeclareMathOperator\csname
#1\endcsname{#1}}
\newcommand{\add}{\mathrm{add}}
\newcommand{\Fun}{\mathrm{Fun}} 
\newcommand{\bbL}{\mathbb{L}}
\newcommand{\Op}{\mathsf{op}}
\newcommand{\ie}{\textsl{i.e.}\ }
\newcommand{\too}{\longrightarrow}
\newcommand{\rep}{\mathrm{rep}} 
\numberwithin{equation}{section}
\newtheorem{theorem}[equation]{Theorem}
\newtheorem*{theorem*}{Theorem}
\newtheorem{lemma}[equation]{Lemma}
\newtheorem{proposition}[equation]{Proposition}
\theoremstyle{definition}
\newtheorem{definition}[equation]{Definition}
\newtheorem{example}[equation]{Example}
\newtheorem{notation}[equation]{Notation}
\newtheorem{convention}[equation]{Convention}
\begin{document}

\title[A characterization of the Chern character maps]{A universal characterization of the \\Chern character maps}
\author{Gon{\c c}alo~Tabuada}
\address{Departamento de Matem{\'a}tica e CMA, FCT-UNL, Quinta da Torre, 2829-516 Caparica,~Portugal}
\email{tabuada@fct.unl.pt}
\thanks{The author was partially supported by the Estimulo {\`a} Investiga{\c c}{\~a}o Award 2008 - Calouste Gulbenkian Foundation.}

\subjclass[2000]{19L10, 18D20, 19D55}

\keywords{Chern character maps, dg categories, (negative) cylic homology}

\begin{abstract}
The Chern character maps are one of the most important working tools in mathematics. Although they admit numerous different constructions, they are not yet fully understood at the conceptual level. In this note we eliminate this gap by characterizing the Chern character maps, from the Grothendieck group to the (negative) cyclic homology groups, in terms of simple universal properties.
\end{abstract}

\maketitle
\setcounter{tocdepth}{0}
\section{introduction}
In his foundational work, Chern~\cite{Chern} introduced {\em character maps} from $K$-theory to de Rham cohomology in order to study complex vector bundles on smooth manifolds. Chern's construction made use of connection forms, exterior derivatives,~$\ldots$

Forty years latter, Connes~\cite{Connes} extended Chern's work to the non-commutative setting. He invented a new theory that plays the role of de Rham cohomology and constructed Chern character maps with values in it. Given a commutative base ring $k$ and a unital (but not necessarily commutative) $k$-algebra $A$, the (functorial) Chern characters maps 
\begin{eqnarray}\label{eq:chern-alg}
ch^-: K_0(A) \to HC_0^-(A) & & ch_n: K_0(A) \to HC_{2n}(A)\,, \,\, n \geq 0
\end{eqnarray}
go form the Grothendieck group to the (negative) cyclic homology groups; see \cite[\S8]{Loday}. Connes's construction made use of idempotents, a generalized trace map, $\ldots$

Latter, in the nineties, McCarthy~\cite{Mac} and Keller~\cite{Keller} extended\footnote{McCarthy worked in the context of exact $k$-categories. Keller then improved McCarthy's definition of cyclic homology, making it available for dg categories.} the Chern character maps (\ref{eq:chern-alg}) from $k$-algebras to dg categories; see \S\ref{sec:additive} for the notion of dg category. Given a commutative and unital base ring $k$, we have natural transformations 
\begin{eqnarray}\label{eq:cherns}
ch^-: K_0 \Rightarrow HC_0^-&&
ch_n: K_0 \Rightarrow HC_{2n}\,,\,\, n \geq 0
\end{eqnarray}
between functors defined on the category of small dg categories and with values in abelian groups; see \cite[\S4.4]{Mac}\cite{Keller}. The construction of the Chern character maps (\ref{eq:cherns}) made use of simplicial $S^1$-fixed points, a generalized $S_{\bullet}$-construction, $\ldots$

At this point one wonders if it is possible to isolate the fundamental properties of the Chern character maps while discarding specific constructions without loosing any information. The natural question is the following\,:

\vspace{0.2cm}

{\it How to conceptually understand/characterize the Chern character maps (\ref{eq:cherns}) ?}

\vspace{0.2cm}

In this note we provide an absolute and a relative solution to this question. Recall form \S\ref{sec:canonical} the description of the canonical isomorphisms 
\begin{eqnarray*}
\psi^-: HC_0^-(k) \stackrel{\sim}{\too} k && \psi_n: HC_{2n}(k) \stackrel{\sim}{\too} k \,,\,\, n \geq 0\,.
\end{eqnarray*} 
\begin{theorem}[Absolute characterization]\label{thm:A} The canonical maps 
\begin{eqnarray}
\Nat(K_0,HC_0^-) \stackrel{\sim}{\too} k && \eta \mapsto \psi^-(\eta(\underline{k})([k])) \label{eq:isom1}\\
\Nat(K_0,HC_{2n}) \stackrel{\sim}{\too} k && \eta \mapsto \psi_n(\eta(\underline{k})([k]))\,,\,\,n \geq 0 \label{eq:isom2}
\end{eqnarray}
are isomorphisms of abelian groups.
Here, $\underline{k}$ stands for the dg category with a single object and with $k$ as the (dg) $k$-algebra of endomorphisms, $[k]$ stands for the class of $k$ (as a module over itself) in the Grothendieck group $K_0(\underline{k})=K_0(k)$, and $\Nat$ stands for the abelian group of natural transformations (with group structure given by objectwise addition).

Under the canonical isomorphisms (\ref{eq:isom1}) and (\ref{eq:isom2}), the Chern character maps $ch^-$ and $ch_n$ (\ref{eq:cherns}) are characterized as the natural transformations corresponding to the unit ${\bf 1}$ of the base ring $k$.
\end{theorem}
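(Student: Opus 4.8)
The plan is to compute the abelian group $\Nat(K_0, HC_0^-)$ (and likewise $\Nat(K_0, HC_{2n})$) by exhibiting $K_0$ as a representable, or at least corepresentable, functor on a suitable universal additive category, so that the Yoneda lemma reduces a natural transformation to the datum of a single morphism. Concretely, I would first recall that $K_0$ is an \emph{additive invariant} of dg categories, factoring through the universal additive motivator/category $\Motadd$ (cf. \S\ref{sec:additive}); the key structural fact I would invoke is that the unit object $\underline{k}$ corepresents $K_0$ in the appropriate sense, i.e. that $K_0(\aA)$ is built functorially out of homotopy classes of maps from $\underline{k}$ into $\aA$ (the generator $[k] = [\mathrm{id}_{\underline k}]$ playing the role of the universal class). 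Both source and target functors $K_0$ and $HC_0^-$ kill the relevant quotient relations and are additive, so every natural transformation between them is determined by its effect on the universal situation.

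Granting this, the heart of the argument is a Yoneda-style evaluation: a natural transformation $\eta \in \Nat(K_0, HC_0^-)$ is completely determined by the single element $\eta(\underline k)([k]) \in HC_0^-(\underline k) = HC_0^-(k)$, because every pair $(\aA, x)$ with $x \in K_0(\aA)$ is obtained by pushing forward the universal class $[k]$ along a dg functor (up to the additive/idempotent-completion relations that both functors respect), and naturality then forces $\eta(\aA)(x)$ to be the corresponding pushforward of $\eta(\underline k)([k])$. Composing with the canonical isomorphism $\psi^-\colon HC_0^-(k) \xrightarrow{\sim} k$ from \S\ref{sec:canonical} gives that the evaluation map (\ref{eq:isom1}) is injective. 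The steps in order are: (i) identify the corepresenting object and the universal class; (ii) verify both functors are additive and send the relevant generators compatibly; (iii) apply Yoneda/evaluation to get injectivity; (iv) produce enough natural transformations to get surjectivity.

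For \emph{surjectivity} I would argue that for each target value $\lambda \in k$ one can realize $\lambda$ as $\eta(\underline k)([k])$ for an actual natural transformation: the Chern character $ch^-$ itself realizes the value $\psi^-(ch^-(\underline k)([k]))$, which I expect to be the unit ${\bf 1} \in k$ by the normalization of the classical Chern character on the base (the trace of the identity idempotent is $1$); then, since $\Nat$ is an abelian group under objectwise addition and the target $k$ is a $k$-module (in fact the functors are $k$-linear, or at least the natural transformations form a $k$-module via the $k$-action on $HC_\ast^-$), every $\lambda \in k$ is realized by $\lambda \cdot ch^-$. This simultaneously proves surjectivity and pins down the final assertion, namely that $ch^-$ corresponds to $\lambda = {\bf 1}$ under (\ref{eq:isom1}); the identical reasoning with $\psi_n$ and $ch_n$ handles (\ref{eq:isom2}).

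The main obstacle I anticipate is step (i)–(iii): making precise the sense in which $K_0$ is corepresented by $\underline k$ so that Yoneda applies, since $K_0$ is not literally a representable set-valued functor on $\dgcat$ but rather an additive invariant landing in abelian groups after idempotent completion and group completion. The delicate point is controlling the passage from the class $[k]$ to an \emph{arbitrary} class $x \in K_0(\aA)$: a general element is a formal difference of classes of possibly non-free modules, so one must know that $HC_0^-$ (and $HC_{2n}$) are additive invariants that factor through the same universal additive category through which $K_0$ factors, and that in that category $\underline k$ becomes a compact generator whose endomorphisms compute $k$. Once the functors are placed on a common additive/motivic footing, naturality rigidly propagates the single value $\eta(\underline k)([k])$ to all of $K_0$, and the computation of $\Nat$ collapses to the abelian group $\Hom$ of maps out of the image of the unit, which is $k$; I would expect the earlier sections \S\ref{sec:additive} and \S\ref{sec:canonical} to supply exactly this structural input.
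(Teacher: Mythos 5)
Your plan follows the paper's skeleton in its core but diverges in how surjectivity is obtained, and the divergence carries a hidden dependency worth flagging. The paper makes your steps (i)--(iii) precise exactly as you anticipate: both $K_0$ and $HC_0^-$ (resp.\ $HC_{2n}$) are additive invariants, hence by Theorem~\ref{thm:additive} they descend to additive functors on the universal additive category $\Hmo$, where Lemma~\ref{lem:additive} gives $\Hom_{\Hmo}(\underline{k},\cA)\simeq K_0(\cA)$, \ie $\overline{K_0}$ is honestly corepresented by $\underline{k}$ with $\id_{\underline{k}}$ corresponding to $[k]$; the enriched ($\mathsf{Ab}$-)Yoneda lemma then yields the \emph{bijection} $\Nat(K_0,E)\simeq E(\underline{k})$ in one stroke (Proposition~\ref{prop:Yoneda}), injectivity and surjectivity together, with no reference to the Chern character. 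Your worry that a general class in $K_0(\cA)$ is not a pushforward of $[k]$ along a dg functor is exactly right, and the resolution is the one you name: naturality must be upgraded from $\dgcat$ to $\Hmo$, whose morphisms are classes of bimodules, not dg functors. Where you genuinely differ is step (iv): you extract only injectivity from evaluation and then produce preimages as $\lambda\cdot ch^-$ via the $k$-module structure on $\Nat$. This works, but it makes surjectivity contingent on the normalization $\psi^-(ch^-(\underline{k})([k]))={\bf 1}$, which you only ``expect'' from the trace of the identity idempotent; were that value a non-unit of $k$, the scaling argument would fail. The normalization is true but not a one-liner: in the paper it is needed only for the second (characterization) clause and is established by combining the agreement property of McCarthy \cite[\S4.5]{Mac} --- identifying $ch_n(\underline{k})$ and $ch^-(\underline{k})$ with the classical maps of \cite[Thm.~8.3.4, Prop.~8.3.8]{Loday} --- with the explicit computation (taking $e={\bf 1}$, via \cite[Lem.~8.3.3]{Loday}) that the image of $[k]$ is the canonical generator $u^n$ of (\ref{eq:generator}) (resp.\ $u^{\infty}$ of (\ref{eq:generator1})), which $\psi_n$ (resp.\ $\psi^-$) sends to ${\bf 1}$ \emph{by definition}. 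In short: same structural engine, but the paper's Yoneda isomorphism decouples the computation of $\Nat$ from the Chern character, whereas your version entangles them; to complete your proof you must actually carry out the $u^n$/$u^{\infty}$ computation rather than appeal to an expected normalization.
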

Recall from \cite[\S5.1.8]{Loday} the sequence of natural transformations
\begin{equation}\label{eq:seq}
HC_0^- \stackrel{I}{\Rightarrow}HC_0^{\mathrm{per}} \Rightarrow \varprojlim_n HC_{2n} \Rightarrow \cdots  \stackrel{S}{\Rightarrow} HC_{2n} \stackrel{S}{\Rightarrow} HC_{2n-2} \Rightarrow \cdots \stackrel{S}{\Rightarrow} HC_0\,.
\end{equation}
\begin{theorem}[Relative characterization]\label{thm:B}
Fix a Chern character map $ch_m$ (\ref{eq:cherns}).
\begin{itemize}
\item[(i)] Then, the Chern character map $ch_n\,, n > m$, is the unique natural transformation such that $S^{(n-m)} \circ ch_n = ch_m$. 
\item[(ii)] Similarly, the Chern character map $ch^-$ is the unique natural transformation such that the composition
\begin{equation}\label{eq:comp}
 K_0 \stackrel{ch^-}{\Rightarrow} HC_0^- \stackrel{I}{\Rightarrow} HC_0^{\mathrm{per}} \Rightarrow \varprojlim_n HC_{2n} \Rightarrow \cdots  \stackrel{S}{\Rightarrow} HC_{2m}
\end{equation}
equals the Chern character map $ch_m$.
\end{itemize}
\end{theorem}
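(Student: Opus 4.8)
The plan is to deduce both statements from the Absolute characterization (Theorem~\ref{thm:A}), which already identifies each relevant group of natural transformations with $k$ and singles out the Chern character maps as the transformations corresponding to $\mathbf{1} \in k$. The only additional input I need is a description of how the structure maps of the sequence (\ref{eq:seq}) act on $HC_*(\underline{k}) = HC_*(k)$, and in particular their compatibility with the canonical isomorphisms $\psi_n$ and $\psi^-$ of \S\ref{sec:canonical}.

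For part (i), I would consider post-composition with $S^{(n-m)}$, which sends a natural transformation to a natural transformation and so defines a map
\[ \Phi\colon \Nat(K_0, HC_{2n}) \too \Nat(K_0, HC_{2m}), \qquad \eta \mapsto S^{(n-m)} \circ \eta. \]
By Theorem~\ref{thm:A} the source and target are bijective to $k$ via $a(\eta) = \psi_n(\eta(\underline{k})([k]))$ and $b(\zeta) = \psi_m(\zeta(\underline{k})([k]))$, with $ch_n$ and $ch_m$ corresponding to $\mathbf{1}$. Since evaluation at $\underline{k}$ and at the class $[k]$ commutes with post-composition, one computes $b(\Phi(\eta)) = \psi_m\bigl(S^{(n-m)}(\eta(\underline{k})([k]))\bigr)$. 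The crux is therefore the identity
\[ \psi_m \circ S^{(n-m)} = \psi_n \colon HC_{2n}(k) \too k, \]
equivalently $\psi_{j-1} \circ S = \psi_j$ for each $j$. Granting this, $b \circ \Phi = a$, so $\Phi = b^{-1} \circ a$ is a bijection carrying $ch_n$ (attached to $\mathbf{1}$) to $ch_m$; this yields the existence $S^{(n-m)} \circ ch_n = ch_m$. Injectivity of $\Phi$ gives the uniqueness: any $\eta$ with $S^{(n-m)} \circ \eta = ch_m$ satisfies $\Phi(\eta) = \Phi(ch_n)$, hence $\eta = ch_n$.

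Part (ii) follows the same template, replacing $S^{(n-m)}$ by the composite of natural transformations
\[ HC_0^- \stackrel{I}{\Rightarrow} HC_0^{\mathrm{per}} \Rightarrow \varprojlim_n HC_{2n} \Rightarrow \cdots \stackrel{S}{\Rightarrow} HC_{2m} \]
appearing in (\ref{eq:comp}). This induces $\Psi\colon \Nat(K_0, HC_0^-) \to \Nat(K_0, HC_{2m})$, and by Theorem~\ref{thm:A} the source is identified with $k$ with $ch^-$ corresponding to $\mathbf{1}$. As before one reduces to checking that the induced composite $HC_0^-(k) \to HC_{2m}(k)$ intertwines $\psi^-$ with $\psi_m$, that is $\psi_m \circ (\text{composite}) = \psi^-$. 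This makes $\Psi$ a bijection under the identifications, giving both the existence $\Psi(ch^-) = ch_m$ and the uniqueness by injectivity.

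The main obstacle is precisely the compatibility of the structure maps with the canonical isomorphisms at the level of the ground ring, namely $\psi_{j-1} \circ S = \psi_j$ together with $\psi_m \circ (\text{composite}) = \psi^-$ on $HC_*(k)$. This amounts to the computation of the cyclic, negative cyclic and periodic cyclic homology of $k$ and the behaviour of $S$, $I$ and the canonical map to the inverse limit; all of this is standard (see \cite{Loday}) and, I expect, is exactly what the description of $\psi_n$ and $\psi^-$ in \S\ref{sec:canonical} is arranged to encode. Once the normalization of these isomorphisms is fixed so that $S$, $I$ and the limit map act as the identity of $k$, both parts are immediate.
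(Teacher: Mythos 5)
Your proposal is correct, and its skeleton is the same as the paper's: identify $\Nat(K_0,HC_0^-)$ and $\Nat(K_0,HC_{2j})$ with $k$ via Theorem~\ref{thm:A}, and reduce everything to the compatibility of $S$, $I$ and the limit map with the canonical isomorphisms $\psi_j$, $\psi^-$ at $\underline{k}$. Your deferred verification is exactly what \S\ref{sec:canonical} supplies: $S$ sends the canonical generator $u^n$ to $u^{n-1}$ (so $\psi_{j-1}\circ S=\psi_j$, cf.\ \cite[Rk.~2.2.2]{Loday}), $I$ is the identity, and $u^{\infty}=\varprojlim_n u^n$; these are the commutative diagrams (\ref{eq:diag}) and (\ref{eq:all}) in the paper. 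There are two small but genuine deviations. First, the paper obtains the \emph{existence} halves ($S^{(n-m)}\circ ch_n=ch_m$, and the composition (\ref{eq:comp}) equals $ch_m$) ``by construction,'' citing McCarthy \cite[Defs.~4.4.1--4.4.2]{Mac} and Loday \cite[(11.4.3.3), Prop.~8.3.8]{Loday}, and uses the Yoneda-type argument only for uniqueness; you instead derive existence internally, from injectivity of $\Phi$ (resp.\ $\Psi$) together with the fact that both Chern characters correspond to ${\bf 1}$ --- this is legitimate and not circular, since Theorem~\ref{thm:A} already encodes the computation $ch_n(\underline{k})([k])=u^n$, and it buys you independence from those constructional citations. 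Second, in part (ii) you post-compose with the \emph{total} composite $HC_0^-\Rightarrow HC_{2m}$ as a single natural transformation and apply Theorem~\ref{thm:A} only at the two ends, so you never need $\varprojlim_n HC_{2n}$ itself to be an additive invariant; the paper instead argues stage by stage and therefore needs Lemma~\ref{lem:lim} (together with Proposition~\ref{prop:Yoneda}) to handle the intermediate functor. Your route is thus marginally leaner; the only caution is your closing phrase about ``fixing the normalization'' --- the $\psi_j$ are already fixed by the generators $u^j$, so the identity-action of $S$, $I$ and the limit map is a \emph{computed property} of those generators (which \S\ref{sec:canonical} verifies), not a free choice, though this does not affect the correctness of your argument.
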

To the best of the author's knowledge, Theorems~\ref{thm:A} and \ref{thm:B} offer the first conceptual characterization of the Chern character maps. Theorem~\ref{thm:A} characterizes the Chern character maps among all natural transformations and Theorem~\ref{thm:B} asserts that, up to compatibility with the sequence (\ref{eq:seq}), there is only one Chern character map. It is expected that the conceptual understanding here presented will play a catalytic role in the several branches of mathematics where the Chern character maps are used. This will be the subject of future research. 
\begin{convention}
In the sequel $k$ denotes a commutative base ring with unit {\bf 1}.
\end{convention}
\section{Additive invariants of dg categories}\label{sec:additive}
In this section we review and develop some of the ingredients of the theory of additive invariants of dg categories \cite{IMRN, IMRNC} which will be used in the proofs of Theorems~\ref{thm:A} and \ref{thm:B}.

A {\em differential graded (=dg) category} (over the base ring $k$) is a category enriched over complexes of $k$-modules (Hom-sets are complexes) in such a way that the composition fulfills the graded Leibniz rule
$$d(f\circ g)=(df)\circ g+(-1)^{\textrm{deg}(f)}f\circ(dg)\,.$$
In particular, a dg category with a single object is simply a dg $k$-algebra. For a survey article on dg categories we invite the reader to consult Keller's ICM adress~\cite{ICM}. We denote by $\dgcat$ the category of small dg categories.

As in the case of a (dg) $k$-algebra, given a dg category $\cA$ we can consider its derived category $\cD(\cA)$ of right $\cA$-modules; see \cite[\S3.1]{ICM}. A dg functor $\cA \to \cB$ is called a {\em Morita equivalence} if the restriction of scalars functor $\cD(\cB) \stackrel{\sim}{\to} \cD(\cA)$ is an equivalence of (triangulated) categories; see \cite[\S3.8]{ICM}.

Let $\cA$ be a dg category. Consider the dg category $T(\cA)$ whose objects are the pairs $(i,x)$, where $i$ is an element of the set $\{1,2\}$ and $x$ is an object of $\cA$. The complex of morphisms in $T(\cA)$, from $(i,x)$ to $(i',x')$, is given by $\cA(x,x')$ if $i'\geq i$ and is $0$ otherwise. Composition is induced by the composition operation in $\cA$; see \cite[\S4]{IMRN} for details. Note that we have two natural inclusion dg functors
\begin{eqnarray*}
i_1: \cA \too T(\cA) && i_2: \cA \too T(\cA)\,.
\end{eqnarray*} 
\begin{definition}\label{def:additive}
Let $E:\dgcat \to \mathsf{D}$ be a functor with values in an additive category. We say that $E$ is an {\em additive invariant} if it satisfies the following two conditions\,:
\begin{itemize}
\item[(i)] it sends the Morita equivalences to isomorphisms;
\item[(ii)] given any dg category $\cA$, the inclusion dg functors induce an isomorphism\footnote{Condition (ii) can be equivalently formulated in terms of a general semi-orthogonal decomposition in the sense of Bondal-Orlov; see \cite[Thm.~6.3(4)]{IMRN}.}
$$ [E(i_1)\,\,\, E(i_2)]: E(\cA) \oplus E(\cA) \stackrel{\sim}{\too} E(T(\cA))\,.$$
\end{itemize}
A {\em morphism} of additive invariants is a natural transformation of functors. We denote by $\Fun_{\mathsf{A}}(\dgcat, \mathsf{D})$ the category of additive invariants with values in $\mathsf{D}$. 
\end{definition}
\begin{example} Examples of additive invariants with values in (the additive category of) abelian groups include the Grothendieck group functor ($K_0$), the cyclic homology group functors ($HC_n, n \geq 0$), the zero periodic homology group functor ($HC_0^{\mathrm{per}}$), the zero negative homology group functor ($HC_0^-$), $\ldots$; see \cite[\S6.1-6.2]{IMRN}.
\end{example}
As in the case of (dg) $k$-algebras, given two dg categories $\cA$ and $\cB$, we can form its tensor product $\cA \otimes  \cB$ (and also its derived version $\cA \otimes^{\bbL}\cB$); see \cite[\S2.3]{ICM}. Consider the additive category $\Hmo$ whose objects are the small dg categories and whose abelian groups of morphisms are given by
$$ \Hmo(\cA, \cB):= K_0 \,\rep(\cA,\cB)\,.$$
Here, $K_0$ is the Grothendieck group~\cite[Def.~4.5.8]{Neeman} of the full triangulated subcategory $\rep(\cA,\cB)$ of $\cD(\cA^\Op\otimes^{\bbL}\cB)$ whose objects are the bimodules $X$ such that for every object $x$ in $\cA$ the $\cB$-module $X(-,x)$ is compact in $\cD(\cB)$. Composition in $\Hmo$ is induced by the (derived) tensor product of bimodules; see \cite[\S6]{IMRN} for details. Note that we have a natural functor
\begin{equation}\label{eq:additive}
\cU_{\mathsf{A}}: \dgcat \too \Hmo
\end{equation}
which is the identity on objects and maps a dg functor $F:\cA \to \cB$ to the class in the Grothendieck group $K_0\, \rep(\cA,\cB)$ of the bimodule in $\rep(\cA,\cB)$ naturally associated to $F$. Since the functor $\cU_{\mathsf{A}}$ is the identity on objects, we will write, whenever it is clear front the context, $\cA$ instead of $\cU_{\mathsf{A}}(\cA)$.
\begin{theorem}\label{thm:additive}
The functor (\ref{eq:additive}) is the universal additive invariant, \ie given any additive category $\mathsf{D}$, we have an induced equivalence of categories
$$ (\cU_{\mathsf{A}})^{\ast}: \Fun_{\add}(\Hmo, \mathsf{D}) \stackrel{\sim}{\too} \Fun_{\mathsf{A}}(\dgcat, \mathsf{D})\,,$$
where $\Fun_{\add}(\Hmo, \mathsf{D})$ denotes the category of additive functors.
\end{theorem}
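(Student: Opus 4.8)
The plan is to prove that $(\cU_{\mathsf{A}})^{\ast}$ is an equivalence by first checking that it is well defined and then exhibiting a quasi-inverse, organized around the two constructions that assemble $\Hmo$: Morita localization and $K_0$-linearization. As a preliminary step I would verify that $\cU_{\mathsf{A}}$ is itself an additive invariant with values in the additive category $\Hmo$. For condition (i) of Definition~\ref{def:additive}, a Morita equivalence $F\colon \cA\to\cB$ induces an equivalence $\cD(\cB)\stackrel{\sim}{\too}\cD(\cA)$, so the bimodule associated to $F$ is invertible for the derived tensor product and its class $\cU_{\mathsf{A}}(F)$ is an isomorphism in $\Hmo$. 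For condition (ii), the triangular dg category $T(\cA)$ carries a semi-orthogonal decomposition into two copies of $\cA$ (the inclusions $i_1,i_2$); applying $K_0\,\rep(-,T(\cA))$ to this decomposition shows that $[\cU_{\mathsf{A}}(i_1)\ \cU_{\mathsf{A}}(i_2)]$ is an isomorphism $\cA\oplus\cA\stackrel{\sim}{\too}T(\cA)$ in $\Hmo$, where $\oplus$ is represented by the disjoint union of dg categories.

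Once this is in place, $(\cU_{\mathsf{A}})^{\ast}$ is well defined: for any additive functor $\bar E\colon \Hmo\to\mathsf{D}$ the composite $\bar E\circ\cU_{\mathsf{A}}$ satisfies (i) because additive functors preserve isomorphisms, and (ii) because they preserve finite direct sums; it clearly sends natural transformations to natural transformations. The real content is to show $(\cU_{\mathsf{A}})^{\ast}$ is fully faithful and essentially surjective.

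For essential surjectivity, let $E\colon \dgcat\to\mathsf{D}$ be an additive invariant; I set $\bar E(\cA):=E(\cA)$ on objects (legitimate since $\cU_{\mathsf{A}}$ is the identity on objects). To define $\bar E$ on a class $[X]\in K_0\,\rep(\cA,\cB)=\Hmo(\cA,\cB)$, I would associate to a representing bimodule $X$ the triangular dg category $T(\cA,\cB;X)$ — objects those of $\cA$ and of $\cB$, with $X$ furnishing the off-diagonal morphism complexes and $0$ in the other direction — which generalizes the category $T(\cA)$ of the text. Its semi-orthogonal decomposition gives, via condition (ii), an isomorphism $E(\cA)\oplus E(\cB)\stackrel{\sim}{\too}E(T(\cA,\cB;X))$ through the two inclusions; combined with the structural dg functors of $T(\cA,\cB;X)$ this extracts a morphism $\bar E(X)\colon E(\cA)\to E(\cB)$, which reduces to $E(F)$ when $X$ is the graph bimodule of a dg functor $F$. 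Conceptually, condition (i) makes $E$ factor through the Morita localization of $\dgcat$, whose morphisms are the isomorphism classes of $\rep(\cA,\cB)$, and condition (ii) then forces this factorization to send direct sums of bimodules to sums of morphisms; the universal recipient of such data is precisely the identity-on-objects category with Hom groups $K_0\,\rep(\cA,\cB)$, namely $\Hmo$.

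The main obstacle — the technical heart carried out in \cite{IMRN} — is to verify that $\bar E$ is genuinely well defined and functorial: that $\bar E(X)$ depends only on the isomorphism class of $X$, that it is \emph{additive over distinguished triangles} so that it descends from objects of $\rep(\cA,\cB)$ to classes in $K_0\,\rep(\cA,\cB)$, and that it is compatible with the derived tensor product of bimodules, which is the composition law of $\Hmo$. Additivity over triangles is exactly where condition (ii) is indispensable: a distinguished triangle $X'\to X\to X''\to X'[1]$ induces a semi-orthogonal decomposition of the associated triangular dg category, which condition (ii) converts into the relation $\bar E(X)=\bar E(X')+\bar E(X'')$ in $\mathsf{D}$. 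Granting this, full faithfulness is comparatively formal: a natural transformation of additive invariants is determined by its components on dg categories, which are the objects of $\Hmo$, and the same factorization argument shows that naturality with respect to dg functors propagates uniquely to naturality with respect to all bimodule classes, so that $(\cU_{\mathsf{A}})^{\ast}$ is bijective on natural transformations. Together with essential surjectivity this yields the asserted equivalence.
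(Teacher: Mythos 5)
Your proposal takes, in substance, the same route as the paper: the paper's own proof is essentially a citation, invoking \cite{IMRN} (Thms.~5.3 and 6.3) to factor any additive invariant $E$ uniquely through $\cU_{\mathsf{A}}$ as an additive functor $\overline{E}$, defining $\overline{\eta}$ objectwise on natural transformations, and observing that $\overline{(-)}$ and $(\cU_{\mathsf{A}})^{\ast}$ are quasi-inverse. What you do is unpack the content of the cited theorems, and your outline matches their actual architecture: condition (i) makes $E$ factor through the Morita localization of $\dgcat$, whose hom-sets are the isomorphism classes of $\rep(\cA,\cB)$ (be aware that this identification of the localized hom-sets is itself the serious model-categorical input of \cite{IMRN}, not a formal consequence of inverting Morita equivalences); condition (ii), applied via semi-orthogonal decompositions of triangular dg categories, gives additivity over distinguished triangles, so the factorization descends from isomorphism classes to the $K_0$-linearized hom-groups of $\Hmo$; and full faithfulness then propagates formally, as you say, since every morphism of $\Hmo$ is a difference of classes of bimodules, each of which is a zig-zag of dg functors and inverted Morita equivalences. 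Your preliminary verification that $\cU_{\mathsf{A}}$ is itself an additive invariant, and that $(\cU_{\mathsf{A}})^{\ast}$ is well defined, is correct and is implicit in the paper.

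One step, however, does not work as literally written. You propose to extract $\bar{E}(X)\colon E(\cA)\to E(\cB)$ from the isomorphism $[E(i_{\cA})\ E(i_{\cB})]\colon E(\cA)\oplus E(\cB)\stackrel{\sim}{\too}E(T(\cA,\cB;X))$ ``combined with the structural dg functors'' of $T(\cA,\cB;X)$. But the only dg functors available are the two inclusions (and the retraction killing the off-diagonal part), and the only composite $E(\cA)\to E(\cB)$ these data yield is $\mathrm{pr}_{E(\cB)}\circ[E(i_{\cA})\ E(i_{\cB})]^{-1}\circ E(i_{\cA})$, which is the zero map, since $[E(i_{\cA})\ E(i_{\cB})]^{-1}\circ E(i_{\cA})$ is the first-summand inclusion; no composite of $E$-images of genuine dg functors here can depend on $X$. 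The nonzero morphism comes instead from the zig-zag $\cA\too \perf(\cB)\stackrel{\sim}{\from}\cB$, sending $a$ to a (cofibrant replacement of) $X(-,a)$ and inverting the Yoneda Morita equivalence --- that is, precisely from the localization description you invoke in your next sentence. The role of $T(\cA,\cB;X)$ and condition (ii) is the one you assign them afterwards: proving that $\bar{E}$ is additive over triangles, hence descends to $K_0\,\rep(\cA,\cB)$. With that one sentence repaired (or deleted in favour of your localization argument), your proposal is a faithful expansion of what the paper outsources to \cite{IMRN}.
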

\begin{proof}
Let $E$ be an object of $\Fun_{\mathsf{A}}(\dgcat, \mathsf{D})$, \ie an additive invariant with values in $\mathsf{D}$. Thanks to \cite[Thms.~5.3 and 6.3]{IMRN} the functor $E$ factors uniquely through $\cU_{\mathsf{A}}$, giving rise to an additive functor $\overline{E}:\Hmo \to \mathsf{D}$. If $\eta$ is an element of $\Nat(E,E')$, \ie a morphism $\eta:E \Rightarrow E'$ of additive invariants, then $\overline{\eta}: \overline{E} \Rightarrow \overline{E'}$, with $\overline{\eta}(\cA):= \eta(\cA)$ for every dg category $\cA$, is a natural transformation of additive functors. We obtain then a well-defined functor
\begin{equation}\label{eq:inverse}
\overline{(-)}: \Fun_{\mathsf{A}}(\dgcat, \mathsf{D}) \too \Fun_{\add}(\Hmo,\mathsf{D})\,.
\end{equation}
Making use of \cite[Thms.~5.3 and 6.3]{IMRN}, we observe that the functors $(\cU_{\mathsf{A}})^{\ast}$ and $\overline{(-)}$ are (quasi-)inverse of each other. This achieves the proof.
\end{proof}
\begin{notation}
We will denote by $\underline{k}$ the dg category with a single object and with $k$ as the (dg) $k$-algebra of endomorphisms.
\end{notation}
\begin{lemma}{(\cite[Lem.~6.5]{IMRN})}\label{lem:additive}
Given any dg category $\cA$, we have a natural isomorphism of abelian groups
$$\Hom_{\Hmo}(\underline{k}, \cA) \simeq K_0(\cA)\,.$$
\end{lemma}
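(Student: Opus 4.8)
The plan is to unwind the definition of the Hom-groups in $\Hmo$ and to identify $\rep(\underline{k},\cA)$ with the triangulated category of perfect $\cA$-modules. By definition, $\Hom_{\Hmo}(\underline{k},\cA) = K_0\,\rep(\underline{k},\cA)$, where $\rep(\underline{k},\cA)$ is a full triangulated subcategory of $\cD(\underline{k}^\Op \otimes^{\bbL} \cA)$. The first step is to recognize $\underline{k}$ as the $\otimes$-unit: since $\underline{k}$ has a single object $\ast$ with $\underline{k}(\ast,\ast)=k$, one has a canonical identification $\underline{k}^\Op \otimes^{\bbL}\cA \simeq \cA$ (the ordinary tensor already computes the derived one because $k$ is $k$-flat), whence an equivalence of triangulated categories $\cD(\underline{k}^\Op \otimes^{\bbL}\cA) \simeq \cD(\cA)$.

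Under this equivalence a bimodule $X$ corresponds to the right $\cA$-module $X(-,\ast)$. The condition defining $\rep(\underline{k},\cA)$ --- that $X(-,x)$ be compact in $\cD(\cA)$ for every object $x$ of $\underline{k}$ --- reduces, since $\underline{k}$ has the single object $\ast$, to the single requirement that $X(-,\ast)$ be a compact object of $\cD(\cA)$. Hence $\rep(\underline{k},\cA)$ is identified, as a full triangulated subcategory, with the category $\cD(\cA)^c$ of perfect $\cA$-modules. Applying $K_0$ then yields
$$\Hom_{\Hmo}(\underline{k},\cA) = K_0\,\rep(\underline{k},\cA) \simeq K_0(\cD(\cA)^c) = K_0(\cA),$$
the last equality being the definition of the Grothendieck group of the dg category $\cA$.

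It remains to check that this isomorphism is natural in $\cA$ (with respect to morphisms in $\Hmo$). A morphism $\cA\to\cB$ in $\Hmo$ is the $K_0$-class of a bimodule $Y\in\rep(\cA,\cB)$, and composition in $\Hmo$ is given by the derived tensor product of bimodules. Under the identifications above, precomposing a perfect $\cA$-module $P$ with $Y$ sends it to $P\otimes^{\bbL}_{\cA}Y$, a perfect $\cB$-module; this is exactly the induced functor $\cD(\cA)^c \to \cD(\cB)^c$ and hence the induced map $K_0(\cA)\to K_0(\cB)$. Therefore the square comparing the two sides commutes, and the isomorphism is natural.

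I expect the identification of the categories to be essentially formal once $\underline{k}$ is recognized as the monoidal unit; the only point requiring genuine care is the last paragraph, namely verifying that the (derived) tensor-product composition in $\Hmo$ corresponds precisely to the standard functoriality of $K_0$ under the equivalence $\rep(\underline{k},\cA)\simeq\cD(\cA)^c$.
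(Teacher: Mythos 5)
Your proof is correct and is essentially the paper's own argument: the paper gives no proof beyond citing \cite[Lem.~6.5]{IMRN}, and that lemma is proved exactly as you do, by using that $\underline{k}$ is the $\otimes$-unit (with no derived correction needed) to identify $\rep(\underline{k},\cA)$ with the perfect (compact) objects of $\cD(\cA)$, whose $K_0$ is by definition $K_0(\cA)$. One cosmetic point: a morphism in $\Hmo(\cA,\cB)$ is in general a difference of classes of bimodules rather than the class of a single $Y\in\rep(\cA,\cB)$, but since $-\otimes^{\bbL}_{\cA}-$ induces a bilinear pairing on $K_0$-classes, checking naturality on generators as you do suffices.
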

\section{Canonical Isomorphisms}\label{sec:canonical}
In this section we describe the canonical isomorphisms
\begin{eqnarray*}
\psi^-: HC_0^-(k) \stackrel{\sim}{\too} k && \psi_n: HC_{2n}(k) \stackrel{\sim}{\too} k \,,\,\, n \geq 0\,.
\end{eqnarray*} 
Straightforward computations show that in the cyclic bicomplex $CC(k)$~\cite[\S2.1.2]{Loday} the maps
\begin{eqnarray*}
b, -b': k^{\otimes(n+1)} \too k^{\otimes n}, \,n \geq 1&& (1-t), N: k^{\otimes(n+1)} \too k^{\otimes(n+1)},\, n \geq 0
\end{eqnarray*}
are given as follows (where $k^{\otimes m}$ was naturally identified with $k$)\,:
\begin{eqnarray*}
b = \left\{ \begin{array}{lcl}
0 & \text{for} & n~\mbox{odd} \\
\id & \text{for} & n~\mbox{even} \\
\end{array} \right.&& 
-b' = \left\{ \begin{array}{lcl}
-\id & \text{for} & n~\mbox{odd} \\
0 & \text{for} & n~\mbox{even} \\
\end{array} \right.
\end{eqnarray*}

\begin{eqnarray*}
(1-t) = \left\{ \begin{array}{lcl}
2 \id & \text{for} & n~\mbox{odd} \\
0 & \text{for} & n~\mbox{even} \\
\end{array} \right.&& 
N = \left\{ \begin{array}{lcl}
0 & \text{for} & n~\mbox{odd} \\
(n+1) \id & \text{for} & n~\mbox{even} \\
\end{array} \right.
\end{eqnarray*}
Therefore, in the total complex $\mathrm{Tot}\,CC(k)$ the differential map $$d:\mathrm{Tot}\,CC(k)_{m+1} \to \mathrm{Tot}\,CC(k)_m$$ vanishes for $m$ even and is surjective for $m$ odd. This implies that for $n \geq 0$, we have
\begin{eqnarray*}
HC_{2n+1}(k)=0 & \mathrm{and} & HC_{2n}(k)=Z_{2n}(\mathrm{Tot}\,CC(k))\,.
\end{eqnarray*}
A careful calculation shows that $Z_{2n}(\mathrm{Tot}\,CC(k))$ is the free $k$-module of rank one generated by the canonical cycle
\begin{equation}\label{eq:generator}
u^n:= (y_n, z_n, \ldots, y_i, z_i, \ldots, y_1, z_1, y_0) \in (\mathrm{Tot}\,CC(k))_{2n}
\end{equation}
where\footnote{Note that $\dfrac{(2i)!}{i!}$ and $\dfrac{(2i)!}{2(i!)}$ are integers.}
\begin{eqnarray*}
y_i:= (-1)^i \dfrac{(2i)!}{i!} {\bf 1} \,,\,\, 0 \leq i \leq n &\mathrm{and}& z_i:= (-1)^{i-1} \dfrac{(2i)!}{2(i!)} {\bf 1} \,,\,\, 1 \leq i \leq n \,.
\end{eqnarray*}
The isomorphisms $\psi_n, n \geq 0,$ are then the unique $k$-linear maps that send the canonical generator $u^n$ to the unit ${\bf 1}$ of the base ring $k$.

Now, recall from \cite[Prop.~5.1.9]{Loday} Milnor's short exact sequence
$$0\too {\varprojlim_n}^1 HC_{2n+1}(k) \too HC_0^{\mathrm{per}}(k) \too  {\varprojlim_n}\, HC_{2n}(k) \too 0\,.$$
Since $HC_{2n+1}(k)=0$ for $n \geq 0$, the Mittag-Leffler condition~\cite[\S5.1.10]{Loday} is automatically fulfilled and so the ${\lim}^1$-term vanishes. We obtain then a canonical isomorphism
\begin{equation}\label{eq:iso-per}
HC_0^{\mathrm{per}}(k) \stackrel{\sim}{\too} {\varprojlim_n} \,HC_{2n}(k)\,.
\end{equation}
Moreover, the periodicity map $S$~\cite[\S2.2]{Loday}, which is induced from the truncation of the cyclic bicomplex, sends the canonical generator $u^n$ of $HC_{2n}(k)$ to the canonical generator $u^{n-1}$ of $HC_{2n-2}(k)$. These facts imply that $HC_0^-(k)$ is the free $k$-module of rank one generated by the canonical cyle
\begin{equation}\label{eq:generator1}
u^{\infty}:=\varprojlim_n u^n = (\ldots, y_i, z_i, \ldots, y_1, z_1, y_0) \in (\mathrm{ToT}\,CC^-(k))_0\,.
\end{equation}
The isomorphism $\psi^-$ is then the unique $k$-linear map that send the canonical generator $u^{\infty}$ to the unit ${\bf 1}$ of the base ring $k$.
\section{Absolute characterization}
\begin{proposition}\label{prop:Yoneda}
Let $E: \dgcat \to \mathsf{Ab}$ be an additive invariant with values in (the additive category of) abelian groups. Then the canonical map
\begin{eqnarray}\label{eq:key}
\Nat(K_0,E) \stackrel{\sim}{\too} E(\underline{k}) && \eta \mapsto \eta(\underline{k})([k])
\end{eqnarray}
is an isomorphism of abelian groups. Here, $[k]$ stands for the class of $k$ (as a module over itself) in the Grothendieck group $K_0(\underline{k})=K_0(k)$ and $\Nat$ stands for the abelian group of natural transformations (with group structure given by objectwise addition).
\end{proposition}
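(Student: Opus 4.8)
The label \texttt{prop:Yoneda} already points the way: the plan is to recognise, after passing to the universal additive invariant, that $K_0$ is the functor represented by $\underline{k}$, and then to invoke the (additive) Yoneda lemma.

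First I would record that $K_0$ is itself an additive invariant with values in $\mathsf{Ab}$. Hence, by Theorem~\ref{thm:additive}, both $K_0$ and the given $E$ factor uniquely through the universal additive invariant $\cU_{\mathsf{A}}$, yielding additive functors $\overline{K_0}, \overline{E}: \Hmo \to \mathsf{Ab}$. Since $(\cU_{\mathsf{A}})^{\ast}$ is an equivalence of (additive) categories, restriction along $\cU_{\mathsf{A}}$ induces an isomorphism of abelian groups $\Nat(\overline{K_0}, \overline{E}) \iso \Nat(K_0, E)$. This reduces the problem to computing natural transformations between additive functors on $\Hmo$.

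Next I would identify $\overline{K_0}$ with a representable functor. By Lemma~\ref{lem:additive} there is a natural isomorphism $\overline{K_0}(\cA) = K_0(\cA) \iso \Hom_{\Hmo}(\underline{k}, \cA)$, so $\overline{K_0}$ is naturally isomorphic to the functor represented by $\underline{k}$. The additive Yoneda lemma then yields an isomorphism of abelian groups $\Nat(\Hom_{\Hmo}(\underline{k}, -), \overline{E}) \iso \overline{E}(\underline{k}) = E(\underline{k})$, sending a natural transformation $\eta$ to $\eta(\underline{k})(\id_{\underline{k}})$. Composing these identifications produces the desired isomorphism $\Nat(K_0, E) \iso E(\underline{k})$.

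It remains to check that this composite is the map (\ref{eq:key}), and here lies the only real point of care. One must verify that under the isomorphism of Lemma~\ref{lem:additive} the element $\id_{\underline{k}} \in \Hom_{\Hmo}(\underline{k}, \underline{k})$ corresponds precisely to the class $[k] \in K_0(\underline{k}) = K_0(k)$; this should be immediate from the construction of that isomorphism, since the identity bimodule on $\underline{k}$ represents $k$ as a module over itself. Granting this, the Yoneda evaluation $\eta \mapsto \eta(\underline{k})(\id_{\underline{k}})$ becomes $\eta \mapsto \eta(\underline{k})([k])$, which is exactly (\ref{eq:key}). The remaining verifications---that the Yoneda bijection respects the objectwise additive structure and that all intermediate maps are group homomorphisms---are routine.
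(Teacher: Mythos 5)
Your proposal is correct and follows essentially the same route as the paper's own proof: reduce via Theorem~\ref{thm:additive} (the functor~(\ref{eq:inverse})) to natural transformations between the induced additive functors $\overline{K_0}$ and $\overline{E}$ on $\Hmo$, identify $\overline{K_0}$ as corepresented by $\underline{k}$ using Lemma~\ref{lem:additive}, apply the $\mathsf{Ab}$-enriched Yoneda lemma, and check that $\id_{\underline{k}}$ corresponds to $[k]$ under the corepresentability isomorphism. The one point you flag as needing care---that $\id_{\underline{k}}$ maps to $[k]$---is exactly the point the paper settles by appeal to the construction of $\Hmo$, so nothing is missing.
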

\begin{proof}
The functors $K_0$ and $E$ are additive invariants and so they belong to the category $\Fun_{\mathsf{A}}(\dgcat, \mathsf{Ab})$. Using the functor (\ref{eq:inverse}) we obtain then an isomorphism
\begin{eqnarray}\label{number:A}
\Nat(K_0,E) \stackrel{\sim}{\too} \Nat (\overline{K_0}, \overline{E}) && \eta \mapsto \overline{\eta}\,,
\end{eqnarray}
which is moreover compatible with the group structure (on both sides) given by objectwise addition. Thanks to Lemma~\ref{lem:additive} the additive functor $\overline{K_0}$ is corepresentable in $\Hmo$ by the object $\underline{k}$. Therefore, since every additive functor is a $\mathsf{Ab}$-functor~\cite[Def.~6.2.3]{Borceaux}, the enriched Yoneda Lemma~\cite[Thm.~8.3.5]{Borceaux} furnish us an isomorphism of abelian groups
\begin{eqnarray}\label{number:B}
\Nat(\overline{K_0},\overline{E}) \stackrel{\sim}{\too} \overline{E}(\underline{k}) & & \overline{\eta} \mapsto \overline{\eta}(\underline{k})(\id_{\underline{k}})\,.
\end{eqnarray}
Note that by construction of $\Hmo$, the identity $\id_{\underline{k}}$ of the object $\underline{k}$ in $\Hmo$ corresponds to the class $[k]$ of $k$ (as a module over itself) in the Grothendieck group $K_0(\underline{k})=K_0(k)$. Since $\overline{E}(\underline{k})=E(\underline{k})$ and $\overline{\eta}(\underline{k})=\eta(\underline{k})$, we conclude that the canonical map (\ref{eq:key}) is the composition of the isomorphisms (\ref{number:A}) and (\ref{number:B}). This achieves the proof.
\end{proof}
\begin{proof}[{\bf Proof of Theorem~\ref{thm:A}}]
We have the equalities 
\begin{eqnarray*}
HC_0^-(\underline{k})=HC_0^-(k)& \mathrm{and} & HC_{2n}(\underline{k})=HC_{2n}(k)\,,\,\,n \geq 0
\end{eqnarray*}
and the functors $HC_0^-$ and $HC_{2n}, n \geq 0,$ are additive invariants with values in abelian groups. Therefore, the canonical isomorphisms (\ref{eq:isom1}) and (\ref{eq:isom2}) follow from combining Proposition~\ref{prop:Yoneda} with the isomorphisms $\psi^-$ and $\psi_n$.

Now, let $ch_n$ be a Chern character map. Thanks to the agreement property~\cite[\S4.5]{Mac}, when we evaluate $ch_n$ at $\underline{k}$ we obtain the map \begin{equation}\label{eq:map1}
K_0(k) \too HC_{2n}(k)
\end{equation}
of \cite[Thm.~8.3.4]{Loday} (with $A=k$). Note that in order to calculate the value $ch_n(\underline{k})([k])$ of the map (\ref{eq:map1}) at $[k]$ we can choose for idempotent $e$ the unit ${\bf 1}$ of the base ring $k\simeq \cM_1(k)$; see \cite[\S8.3.1]{Loday}. Therefore, a careful analysis shows that $ch_n(\underline{k})([k])$ is precisely the canonical generator $u^n$ (\ref{eq:generator}) of $HC_{2n}(k)$; see \cite[Lem.~8.3.3]{Loday}. Using $\psi_n$, we conclude that under the canonical isomorphism (\ref{eq:isom2}), the Chern character map $ch_n$ is characterized as the natural transformation corresponding to the unit ${\bf 1}$ of the base ring $k$. 

We now consider the Chern character map $ch^-$. Thanks to the agreement property~\cite[\S4.5]{Mac}, when we evaluate $ch^-$ at $\underline{k}$ we obtain the map
\begin{equation}\label{eq:map2}
K_0(k) \too HC_0^-(k)
\end{equation}
of \cite[Prop.~8.3.8]{Loday} (with $A=k$). Once again, in order to calculate the value $ch^-(\underline{k})([k])$ of the map (\ref{eq:map2}) at $[k]$, we can choose for idempotent $e$ the unit ${\bf 1}$ of the base ring $k\simeq \cM_1(k)$. In this case we realize that $ch^-(\underline{k})([k])$ is precisely the canonical generator $u^{\infty}$ (\ref{eq:generator1}) of $HC_0^-(k)$. Using $\psi^-$, we conclude that under the canonical isomorphism (\ref{eq:isom1}), the Chern character map $ch^-$ is characterized as the natural transformation corresponding to the unit ${\bf 1}$ of the base ring $k$.
\end{proof} 
\section{Relative characterization}
\begin{proof}[{\bf Proof of Theorem~\ref{thm:B}}]
By construction, see \cite[Defs.~4.4.1-4.4.2]{Mac} and diagram \cite[(11.4.3.3)]{Loday} (with $n=0$), we have $S^{(n-m)}\circ ch_n = ch_m$. Now, recall that the periodicity map
$$ S: HC_{2n} \Rightarrow HC_{2n-2}\,,$$
when evaluated at $\underline{k}$, sends the canonical generator $u^n$ (\ref{eq:generator}) of $HC_{2n}(k)=HC_{2n}(\underline{k})$ to the canonical generator $u^{n-1}$ of $HC_{2n-2}(k)$; see \cite[Rk.~2.2.2]{Loday}. We have then the following commutative diagram
\begin{equation}\label{eq:diag}
\xymatrix{
 HC_{2n}(k) \ar[d]_{\sim}^{\psi_n} \ar[r]^-S & HC_{2n-2}(k) \ar[d]_{\sim}^{\psi_{(n-1)}} \ar[r]^-S & \cdots \ar@{}[d]|{\cdots} \ar[r]^-S & HC_{2m}(k)  \ar[d]_{\sim}^{\psi_m} \\ k \ar@{=}[r] & k \ar@{=}[r] & \cdots \ar@{=}[r] & k
}
\end{equation}
Thanks to Theorem~\ref{thm:A} the Chern character map $ch_n$ corresponds, under the canonical isomorphism (\ref{eq:isom2}), to the unit ${\bf 1}$ of the base ring $k$. Therefore, using the above commutative diagram (\ref{eq:diag}), we conclude that $ch_n$ is in fact the unique natural transformation such that $S^{(n-m)}\circ ch_n = ch_m$. This shows item (i). 

Let us now show item (ii). Once again by construction, the composition (\ref{eq:comp}) equals the Chern character map $ch_m$; see~\cite[Prop.~8.3.8]{Loday}. Recall from~\cite[\S5.1]{Loday} that the natural transformation $$I: HC_0^- \Rightarrow HC_0^{\mathrm{per}}$$
is the identity. We have then the following commutative diagram
\begin{equation}\label{eq:all}
\xymatrix{
HC_0^-(k) \ar@{=}[r] \ar[d]_{\sim}^{\psi^-} & HC_0^{\mathrm{per}}(k) \ar[d]_{\sim}^{\psi^-} \ar[r]_-{\sim}^-{\theta} & \varprojlim HC_{2n}(k) \ar[d]^{\varprojlim \psi_n}_{\sim} \ar[r] & \cdots \ar@{}[d]|{\cdots} \ar[r]^-S & HC_{2m}(k) \ar[d]_{\sim}^{\psi_m} \\
k \ar@{=}[r] & k \ar@{=}[r] & k \ar@{=}[r] & \cdots \ar@{=}[r] & k \,,
}
\end{equation}
where $\theta$ is the isomorphism (\ref{eq:iso-per}).
Thanks to Theorem~\ref{thm:A} the Chern character map $ch^-$ corresponds, under the canonical isomorphism (\ref{eq:isom1}), to the unit ${\bf 1}$ of the base ring $k$. Therefore, using Lemma~\ref{lem:lim}, Proposition~\ref{prop:Yoneda}, and the above commutative diagram (\ref{eq:all}), we conclude that $ch^-$ is in fact the unique natural transformation such that the composition (\ref{eq:comp}) equals the Chern character map $ch_m$. This shows item (ii).
\end{proof}
\begin{lemma}\label{lem:lim}
The functor
\begin{equation*}
 \varprojlim_n HC_{2n}: \dgcat \too \mathsf{Ab} 
 \end{equation*}
is an additive invariant.
\end{lemma}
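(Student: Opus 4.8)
The plan is to verify the two defining conditions of an additive invariant (Definition~\ref{def:additive}) directly for the inverse limit functor $\varprojlim_n HC_{2n}$. I will use that each individual functor $HC_{2n}$, $n \geq 0$, is already known to be an additive invariant with values in abelian groups, and that the periodicity maps $S: HC_{2n} \Rightarrow HC_{2n-2}$ organize these functors into a tower. By definition, $\varprojlim_n HC_{2n}$ sends a dg category $\cA$ to the inverse limit in $\mathsf{Ab}$ of the tower $\cdots \too HC_{2n}(\cA) \stackrel{S}{\too} HC_{2n-2}(\cA) \too \cdots \too HC_0(\cA)$, functorially in $\cA$.

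First I would dispatch condition (i). Given a Morita equivalence $f: \cA \to \cB$, each map $HC_{2n}(f)$ is an isomorphism because $HC_{2n}$ is an additive invariant; by the naturality of the maps $S$ with respect to $f$, these assemble into a morphism of towers that is an isomorphism at every level, and the inverse limit of a levelwise isomorphism of towers is again an isomorphism. Hence $\varprojlim_n HC_{2n}(f)$ is an isomorphism, and $\varprojlim_n HC_{2n}$ sends Morita equivalences to isomorphisms.

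The heart of the argument is condition (ii). For a fixed $\cA$ and the inclusion dg functors $i_1, i_2: \cA \to T(\cA)$, each level $n$ supplies the isomorphism $[HC_{2n}(i_1)\,\,\,HC_{2n}(i_2)]: HC_{2n}(\cA)\oplus HC_{2n}(\cA) \stackrel{\sim}{\too} HC_{2n}(T(\cA))$ coming from additivity of $HC_{2n}$. I would then check, using the naturality of $S$ with respect to $i_1$ and $i_2$, that these isomorphisms commute with the tower maps, where the source tower carries the maps $S \oplus S$; in other words, the family $\{[HC_{2n}(i_1)\,\,\,HC_{2n}(i_2)]\}_n$ is an isomorphism of towers. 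Passing to inverse limits then produces an isomorphism $\varprojlim_n\bigl(HC_{2n}(\cA)\oplus HC_{2n}(\cA)\bigr) \stackrel{\sim}{\too} \varprojlim_n HC_{2n}(T(\cA))$.

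The one remaining point, and the only place requiring any care, is the interchange of $\varprojlim$ with the finite direct sum: I would invoke that a finite biproduct in $\mathsf{Ab}$ is simultaneously a finite product and that inverse limits commute with products, which gives $\varprojlim_n\bigl(HC_{2n}(\cA)\oplus HC_{2n}(\cA)\bigr) \cong \bigl(\varprojlim_n HC_{2n}(\cA)\bigr)\oplus\bigl(\varprojlim_n HC_{2n}(\cA)\bigr)$. Composing the two displayed isomorphisms yields condition (ii). I do not anticipate a genuine obstacle here; in particular, because the whole argument runs through levelwise isomorphisms of towers, no $\varprojlim^1$-term ever intervenes.
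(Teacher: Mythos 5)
Your proof is correct and follows essentially the same route as the paper, which disposes of condition (i) as clear and deduces condition (ii) from the additivity of each $HC_{2n}$ together with the fact that finite sums agree with finite products in $\mathsf{Ab}$ (so that they commute with $\varprojlim$). Your write-up simply makes explicit the levelwise-isomorphism-of-towers bookkeeping that the paper leaves implicit.
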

\begin{proof}
Condition (i) is clear. Condition (ii) follows from the fact that the functors $HC_{2n}, n \geq 0,$ are additive invariants and that finite sums agree with finite products in $\mathsf{Ab}$. 
\end{proof} 
\noindent\textbf{Acknowledgments:} The author is very grateful to Paul Balmer, Andrew Blumberg, David Gepner and Christian Haesemeyer for stimulating conversations. He would also like to thank the Department of Mathematics at UCLA for its hospitality and excellent working conditions, where part of this work was carried out.

\end{document}